\documentclass[12pt]{article}
\usepackage{setspace}
\usepackage{geometry}
\usepackage{color}
\usepackage{amsfonts}
\usepackage{amssymb}
\usepackage{amsmath}

\usepackage[mathscr]{eucal}
\usepackage{amsfonts}
\usepackage{amsmath,amsxtra,latexsym,amsthm, amssymb, amscd}
\usepackage[colorlinks,citecolor=blue,filecolor=black,linkcolor=blue,urlcolor=black]{hyperref}
\usepackage{pgfplots} 
\usepackage{url}
\usepackage[round]{natbib}
\usepackage{fancyhdr}
\usepackage{tikz}

\usepackage[round]{natbib}
\usepackage{fancyhdr}
\usepackage{makeidx}

\makeindex

\usepackage{graphicx}
\usepackage{hyperref}
\usepackage{subcaption}
\usepackage{caption}
\usepackage{epstopdf}

\usepackage{soul}

\newtheorem{example}{Example}

\newtheorem{lemma}{Lemma}

\newtheorem{proposition}{Proposition}
\newtheorem{remark}{Remark}

\newtheorem{assum}{Assumption}

\newcommand{\rr}{\mathbb{R}}

\begin{document}

\title{{\Large\bf A note on first-order and transversality conditions in infinite-horizon continuous-time optimal control models}}
\author{Stefano BOSI\thanks{
CEPS, Universit\'e Paris-Saclay. Email: stefano.bosi@univ-evry.fr} \and David DESMARCHELIER\thanks{BETA, University of Lorraine. Email: david.desmarchelier@univ-lorraine.fr} \and Ngoc-Sang PHAM\thanks{EM Normandie Business School, M\'etis Lab. Email: npham@em-normandie.fr} }
\date{\today}

\maketitle
\begin{abstract}

We provide simple and easily verifiable necessary and sufficient conditions for infinite-horizon continuous-time optimal control problems. Under standard concavity and integrability assumptions, we show that optimal paths are fully characterized by first-order conditions together with a transversality condition. Importantly, the transversality condition is derived as a consequence of the model’s structure rather than imposed a priori. Our results apply directly to standard economic models, including optimal growth and consumption–saving problems.
\\
\newline
{\it Keywords:} optimal control, infinite-horizon, continuous-time, maximum principle, transversality condition, optimal growth.\\
\textit{JEL Classifications}: C61, D15.

\end{abstract}


\section{Introduction}
Continuous-time optimal control models play a central role in economics, particularly in growth theory and macroeconomic dynamics. While the maximum principle provides a standard framework to characterize optimal paths, its application in infinite-horizon settings often raises practical difficulties.

Classic references such as \cite{clarke2013functional} and \cite{vinter2010optimal} provide rigorous treatments, but focus on finite-horizon settings. These textbooks are intended for mathematicians and, by the way, require  a solid background in mathematics, including functional analysis and non-smooth analysis, which may limit their accessibility to economists.

 \citet[Chapter 7]{Acemoglu2009introduction}   also introduces the optimal control theory. He presents Theorem 7.13 to show the maximum principle for discounted infinite-horizon problems. However, as  \cite{Acemoglu2009introduction} himself acknowledges in Section 7.9, verifying the assumptions in Theorem 7.13 is far from straightforward.

The textbook of \cite{SeierstadSydsaeter1987optimal} provides  an excellent introduction to the theory of optimal control and numerous applications in economic models. However, this book pays less attention\footnote{See discussions in Chapters 3 and 6 in \cite{SeierstadSydsaeter1987optimal}.} to the transversality conditions (TVCs for short) and significant progresses on TVCs have been done since its publication.

Within standard economic frameworks, the necessity of transversality conditions can be established using methods developed by \cite{EkelandScheinkman1986}, \cite{Michel1982}, \cite{Kamihigashi2001}. However, as shown by \cite{Halkin1974} in Section 5, transversality conditions may not be satisfied in general. 
 This suggests that transversality conditions should be derived from the structure of the model rather than imposed a priori. 

This note provides simple and easily verifiable necessary and sufficient conditions for infinite-horizon continuous-time optimal control problems. Under standard concavity and integrability assumptions, we show that optimal paths can be fully characterized by first-order conditions together with a transversality condition that emerges endogenously. 

Our results are directly applicable to commonly used economic models, including optimal growth and consumption–saving problems. By offering a tractable and unified set of conditions, this note aims to facilitate the use of continuous-time optimal control techniques in applied economic analysis.

\section{Problem statement and results}
We consider the following problem (P):
\begin{subequations}
\begin{align}
\text{Problem (P): } \quad \max_{c(\cdot),x(\cdot)}\int_{t_0}^{\infty}e^{-\theta t}u\left(c(t),x(t)\right)  dt,\\
\label{constraint1}c(t)+\dot{x}(t)= f(x(t),t) \text{ a.e.},\\
\label{constraint2}(x(t),\dot{x}(t))\in  \mathcal{X}_t\subseteq \rr^2,
\end{align}
\end{subequations}where $\theta>0$,   $c:\mathcal{T}\to \mathbb{R}$, $x:\mathcal{T}\to \mathbb{R}$ with $\mathcal{T}\equiv [t_0,\infty)$, and a.e. means that \eqref{constraint1} holds for almost every $t \in  \mathcal{T}$.\footnote{See Footnote 4 in \cite{Halkin1974}.} The initial value $x(t_0)$ is given. 

We introduce fundamental notions. A pair $(c(\cdot), x(\cdot))$ of functions is called admissible if it satisfies \eqref{constraint1}, \eqref{constraint2} and the function $x(\cdot)$ is piecewise differentiable\footnote{See Footnote 2 in \cite{Halkin1974}.}  (this ensures that the integral $\int_{t_0}^{T}e^{-\theta t}u\left(c(t),x(t)\right)dt$ is well defined for any $T>0$).\footnote{In the classical literature of optimal control \citep{vinter2010optimal,clarke2013functional}, it requires the absolute continuity.} 


An admissible pair $(c(\cdot), x(\cdot))$ is a solution to the problem (P) if $\int_{0}^{\infty}e^{-\theta t}u\left(c(t),x(t)\right)  dt\in (-\infty,\infty)$ and for any admissible pair $(\tilde{c}(\cdot), \tilde{x}(\cdot))$, we have 
\begin{align}\label{limsup_condition}
\int_{t_0}^{\infty}e^{-\theta t}u\left(c(t),x(t)\right)  dt\geq \limsup_{T\to\infty}\int_{t_0}^{T}e^{-\theta t}u\left(\tilde{c}(t),\tilde{x}(t)\right)dt,
\end{align}
or, equivalently, $\liminf_{T\to\infty}\Big(\int_{t_0}^{T}e^{-\theta t}u\left(c(t),x(t)\right)  dt- \int_{t_0}^{T}e^{-\theta t}u\left(\tilde{c}(t),\tilde{x}(t)\right)dt\Big)\geq 0$.\footnote{This definition of optimality corresponds to the catching-up criterion optimality in  \citet[Chapter 3, Section 7]{SeierstadSydsaeter1987optimal}. \cite{Kamihigashi2001} considers the criterion $\limsup_{T\to\infty}\Big(\int_{0}^{T}e^{-\theta t}u\left(c(t),x(t)\right)  dt- \int_{0}^{T}e^{-\theta t}u\left(\tilde{c}(t),\tilde{x}(t)\right)dt\Big)\geq 0$, which is weaker.}

Following \cite{Kamihigashi2001}, we introduce useful notions.
A solution $(c^*(\cdot), x^*(\cdot))$ is said to be interior if for all $t$, there exists $\epsilon>0$ such that $\forall s\in (t-\epsilon,t+\epsilon)$, the ball $B((x^*(s),\dot{x}^*(s)), \epsilon)\subset \mathcal{X}(s)$, where $B(a, \epsilon)\equiv \{a'\in \rr^2: \|a'-a\|<\epsilon\}$  $\forall a\in \rr^2$ and $\|\cdot\|$ is the Euclidean distance.

A solution $(c^*(\cdot), x^*(\cdot))$ is said to be regular if it satisfies regularity conditions: (R1) for all $t$, there exists an open set $J(t)\subset \mathcal{X}_t$ such that $(x^*(t),\dot{x}^*(t))\in J(t)$ and the function $v_t: (y,z)\mapsto u(f(y,t)-z,y)$ is in $C^1$ on $J(t)$, (R2) the functions $p_t:t\mapsto \frac{\partial v_t}{\partial y}(x^*(t),\dot{x}^*(t))$ and $q_t: t\mapsto \frac{\partial v_t}{\partial z}(x^*(t),\dot{x}^*(t))$ are piecewise continuous and right continuous, (R3) for all $t$, there exist $\epsilon,B>0$ such that $(y,z)\in J(s)$, $|\frac{\partial v_s}{\partial y}(y,z)|\leq B$ and $|\frac{\partial v_s}{\partial z}(y,z)|\leq B$ $\forall s\in (t-\epsilon,t+\epsilon), \forall (y,z)\in B((x^*(s),\dot{x}^*(s)), \epsilon)$.\footnote{Conditions (R1), (R2), (R3) respectively correspond to Assumptions (A4), (A5), (A6) in \cite{Kamihigashi2001}. 
}


Let us mention some particular cases of the problem (P) widely used in economics.
\begin{enumerate}
\item When $\mathcal{X}_t=\{(x,y)\in \rr^2:f(x,t)-y\in\mathcal{U}\}$, where $\mathcal{U}$ is a fixed subset of $\rr$, constraint \eqref{constraint2} becomes $c(t)\in \mathcal{U}\subseteq \rr$.\footnote{This corresponds to constraint (170) in \citet[p.~230]{SeierstadSydsaeter1987optimal}.}

\item Let $\mathcal{X}_t=\{(x,y)\in \rr^2: x\geq 0, f(x,t)-y\geq 0\}$. Assume that the function $u\left(\cdot,x\right)$ does not depend on $x$. We can simply write $u\left(c,x\right)=u(c)$, where $u: \rr_+\to \rr_+$ is the utility function. By the way, we recover an optimal growth model with the constraints $c(t)+\dot{x}(t)=f(x(t),t), x(t)\geq 0$, where $x(t)$ represents the capital and $c(t)$ the consumption. 

In this case, a solution $(c^*(\cdot), x^*(\cdot))$ is interior and regular if $c^*(t), x^*(t)>0$ $\forall t$, and both $u$ and $f$ are continuously differentiable.
\end{enumerate}

Let us define the functions  $L: \rr\times \rr\times \rr_+\to \rr$, $G: \rr\times \rr\times \rr_+\to \rr$ and $H: \rr\times \rr\times \rr_+\times \rr^2\to \rr$ as follows.
\begin{subequations}
\label{Assumption_LGH}
\begin{align}
L(x,c,t)&\equiv e^{-\theta t}u\left(  c,x\right), \quad G(x,c,t)\equiv f(x,t)-c\\
H(x,c,t,\lambda_0,\lambda)&\equiv \lambda_0 L(x,c,t)+\lambda  G(x,c,t)=\lambda_0 e^{-\theta t}u\left(c,x\right)+\lambda  \big(f(x,t)-c\big).
\end{align}
\end{subequations}

\begin{assum}\label{assumption_basic}
The function $f:\rr\times \rr_+\to \rr$ is continuously differentiable.  The function $u:\rr\times \rr\to \rr$ is continuously differentiable, non-decreasing in each component and $\frac{\partial u}{\partial c}(c,x)>0$ $\forall (c,x)\in \rr_{++}^2$.
\end{assum}

\begin{assum}\label{concave_assumption} For each $(t,\lambda_0,\lambda)$, the function $H(\cdot, \cdot, t,\lambda_0,\lambda)$ is concave.


\end{assum}

\subsection{Necessary conditions}
We introduce some assumptions playing an important role in proving the transversality conditions.
\begin{assum}[Concavity]
\label{assum_Ut}The function $f(\cdot,t)$ is concave and $f(0,t)\geq 0$ $\forall t$. Moreover, there exists $\lambda_1\in [0,1)$ such that 
$\lambda \xi\in \mathcal{X}_t$ $\forall \xi\in \mathcal{X}_t,\forall \lambda\in [\lambda_1,1).$ 
\end{assum}


This is a variant of Assumption 3.3 in \citet{Kamihigashi2001}.  Assumption \ref{assum_Ut} holds in standard economic models, under mild concavity conditions.
\begin{assum}\label{tvc_assumption_general}
    There exist $\theta^*,\theta^*_0\in \rr$ and $\underline{\lambda}\in (0,1)$ such that $\frac{u(c,x)-u(\lambda c,\lambda x)}{1-\lambda}\leq \theta^* u(c,x) +\theta^*_0$ $\forall \lambda\in (\underline{\lambda}, 1), \forall (c,x)\in \{(c',x'): u(c',x')>-\infty\}$.
\end{assum}

This is  a variant of Assumption (ii) in Proposition 5.1 in  \cite{EkelandScheinkman1986} and  Assumption 3(2) in \cite{Pham2025_relationship}. As pointed out by \cite{Pham2025_relationship}, this assumption is satisfied in standard setups, for instance, when $u(c,x)=c^{1-\sigma}/(1-\sigma)$, where $0<\sigma\not=1$ or $u(c)=\ln(c)$. It also holds when $u(0)>-\infty$.

\begin{proposition}\label{prop_necessary}
Consider the problem (P). Let Assumptions \ref{assumption_basic} and \ref{concave_assumption} be satisfied. Consider a solution $(c^*(\cdot), x^*(\cdot))$ which is  interior and regular. 

(1) Then there exists a piecewise  differentiable function $\lambda:  \mathcal{T}\to \rr$ such that 
\begin{subequations}\label{foc_general_prop1}
\begin{align}\label{foc_general_1}
&e^{-\theta t}\frac{\partial u(c^*(t),x^*(t))}{\partial c}=\lambda(t),\\
\label{foc_general_2}&\dot{\lambda}(t) = -e^{-\theta t}\frac{\partial u(c^*(t),x^*(t))}{\partial x}-\lambda(t) \frac{\partial f^*}{\partial x}(x^*(t),t).
\end{align}\end{subequations}


(2) If Assumptions \ref{assum_Ut} and \ref{tvc_assumption_general} hold,\footnote{Actually, in our proof, we only need condition in Assumption \ref{tvc_assumption_general} for any $c\in \{c^*_t:t\geq 0\}$, but not necessarily for any $c\in \{z: u(z)>-\infty\}$.} and $\int_{0}^{\infty}e^{-\theta t}|u( c^*(t),x^*(t))| dt<\infty$, we have the so-called transversality condition \begin{align}
\label{tvc_condition_general}
    \limsup_{t\to\infty}{\lambda}(t) x^*(t)&\leq 0.
\end{align}
By consequence, if there exists $t_1$ such that $x^*(t)\in \rr_+$ $\forall t\geq t_1$, then $\lim_{t\to\infty}{\lambda}(t)x^*(t)=0.$
\end{proposition}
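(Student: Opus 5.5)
For part (1), I will reduce (P) to a calculus-of-variations problem in the state alone, as in \cite{Kamihigashi2001}. Substituting $c=f(x,t)-\dot x$ turns the objective into $\int_{t_0}^\infty e^{-\theta t}v_t(x(t),\dot x(t))\,dt$, with $v_t$ as in (R1).

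Fix $T$ and a smooth perturbation $h$ with compact support in $(t_0,T)$. Interiority guarantees that $(x^*+\varepsilon h,\dot x^*+\varepsilon\dot h)\in\mathcal{X}_t$ for $|\varepsilon|$ small. Since the perturbed path coincides with $x^*$ after $T$, the catching-up criterion \eqref{limsup_condition} reduces to optimality on $[t_0,T]$, so
\[
\phi(\varepsilon)\equiv\int_{t_0}^T e^{-\theta t}v_t(x^*+\varepsilon h,\dot x^*+\varepsilon \dot h)\,dt
\]
is maximized at $\varepsilon=0$. By (R1) and (R3), dominated convergence lets me differentiate under the integral. This gives $\int (P(t)h+Q(t)\dot h)\,dt=0$, where
\[
P=e^{-\theta t}\Big(\tfrac{\partial u}{\partial c}f_x+\tfrac{\partial u}{\partial x}\Big),\qquad Q=-e^{-\theta t}\tfrac{\partial u}{\partial c}.
\]
Both are piecewise continuous by (R2). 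The du Bois-Reymond lemma then yields $Q(t)=\int_{t_0}^t P\,ds+\text{const}$, so $Q$ is piecewise differentiable with $\dot Q=P$.

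I define $\lambda(t)\equiv -Q(t)=e^{-\theta t}u_c(c^*,x^*)$, which is \eqref{foc_general_1}. Then $\dot\lambda=-P$ is exactly \eqref{foc_general_2}. Assumption \ref{concave_assumption} is not needed for this part; it only matters for sufficiency.

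For part (2) I follow the scaling argument of \cite{EkelandScheinkman1986} and \cite{Kamihigashi2001}. Fix $T$ and $\mu\in(\max(\lambda_1,\underline\lambda),1)$. I build a comparison path equal to $x^*$ on $[t_0,T]$ and equal to $\mu x^*$ after $T$, joined continuously. One option is to move linearly from $x^*(T)$ to $\mu x^*(T+\delta)$ over a short interval. Alternatively, and more cleanly, I use $\tilde x=\mu x^*$ after $T$ and absorb the jump $(1-\mu)x^*(T)$ into an impulse of consumption; the smooth version is obtained by approximating this.

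Assumption \ref{assum_Ut} makes this path feasible: $\mu(x^*,\dot x^*)\in\mathcal{X}_t$. Concavity of $f$ with $f(0,t)\ge0$ gives $f(\mu x,t)\ge \mu f(x,t)$, so the implied consumption satisfies $\tilde c\ge \mu c^*$. Since $u$ is non-decreasing, $u(\tilde c,\tilde x)\ge u(\mu c^*,\mu x^*)$ after $T$.

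Optimality in the $\liminf$ form, combined with concavity of $u$ (which comes from Assumption \ref{concave_assumption}), bounds the gain from the extra consumption near $T$ below by roughly $\lambda(T)(1-\mu)x^*(T)$. This leads to
\[
\lambda(T)(1-\mu)x^*(T)\le \int_T^\infty e^{-\theta t}\big[u(c^*,x^*)-u(\mu c^*,\mu x^*)\big]dt+o(1).
\]
Dividing by $1-\mu$ and applying Assumption \ref{tvc_assumption_general} bounds the right side by
\[
\int_T^\infty e^{-\theta t}\big(\theta^*|u(c^*,x^*)|+|\theta_0^*|\big)dt.
\]
By the integrability hypothesis this tends to $0$ as $T\to\infty$, which gives \eqref{tvc_condition_general}. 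If in addition $x^*\ge0$ eventually, then $\lambda x^*\ge0$ because $\lambda>0$ by \eqref{foc_general_1} and $u_c>0$, so the limit is $0$.

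The main obstacle is rigorously controlling the transition at $T$. The comparison path must be piecewise differentiable and admissible. The first-order gain must equal $\lambda(T)(1-\mu)x^*(T)$ up to errors that vanish as the transition length shrinks, uniformly enough to pass to the limit. To handle this I would first try the supergradient inequality of the concave $H$ (Assumption \ref{concave_assumption}) together with (R3), rather than a Taylor expansion.
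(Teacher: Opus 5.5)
Your part (1) works. It is a self-contained version of the Euler-equation result (Theorem 3.1 of \cite{Kamihigashi2001}) that the paper simply cites. In part (2), your feasibility and domination checks match what the paper verifies before invoking Theorem 3.3(ii) there: Assumption \ref{assum_Ut}, $f(\mu x,t)\ge\mu f(x,t)$, monotonicity of $u$, Assumption \ref{tvc_assumption_general}, and integrability. The gap is the step you flag yourself, the transition at $T$, and your plan for it fails in three ways.

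First, with $\mu$ fixed and the transition length $\delta$ shrinking, or with an outright consumption impulse, the extra consumption rate is of order $(1-\mu)x^*(T)/\delta\to\infty$. By concavity the utility gain is then far below $\lambda(T)(1-\mu)x^*(T)$. For $u=\ln c$ it is roughly $e^{-\theta T}\delta\ln\bigl(1+(1-\mu)x^*(T)/(\delta c^*(T))\bigr)\to 0$. So the claimed relation ``gain $=\lambda(T)(1-\mu)x^*(T)+o(1)$'' is off by the whole main term, and the discrepancy does not vanish after dividing by $1-\mu$.

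Second, on the transition the path is not an exact rescaling of $(x^*,\dot x^*)$, so Assumption \ref{assum_Ut} gives no feasibility there. Interiority only covers perturbations of $(x,\dot x)$ smaller than $\epsilon$, which a steep transition violates.

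Third, the supergradient inequality for the concave $H$ gives $H(\tilde x,\tilde c,\cdot)\le H^*+H^*_x(\tilde x-x^*)+H^*_c(\tilde c-c^*)$. That is an \emph{upper} bound on the gain from deviating; it is exactly how \eqref{tvc_remark} is obtained for sufficiency. Necessity needs a \emph{lower} bound.

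What works is to let $\mu\to1$ first, with the transition held fixed, and to use differentiability rather than concavity.
Fix $T$ and $\delta>0$. Let $\beta$ rise linearly from $0$ to $1$ on $[T,T+\delta]$, with $\beta=0$ before and $\beta=1$ after, and compare with $x_\mu=x^*+(1-\mu)h$, where $h=-\beta x^*$.
For $\mu$ near $1$ this is a uniformly small perturbation on the compact interval $[T,T+\delta]$, hence feasible by interiority. After $T+\delta$ it equals $\mu x^*$, which is feasible by Assumption \ref{assum_Ut}.

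Divide the optimality inequality by $1-\mu$ and let $\mu\to1$. By the same dominated-convergence argument as in part (1), the transition part converges to $\int_T^{T+\delta}(v_yh+v_z\dot h)\,dt$, where $v(y,z,t)=e^{-\theta t}u(f(y,t)-z,y)$. By the Euler equation $\frac{d}{dt}v_z=v_y$ from part (1), this integral equals $[v_zh]_T^{T+\delta}=\lambda(T+\delta)x^*(T+\delta)$, since $v_z=-\lambda$ along $x^*$. The tail is bounded uniformly in $\mu$ through Assumption \ref{tvc_assumption_general}, giving
\[
\lambda(T+\delta)x^*(T+\delta)\le\int_{T+\delta}^\infty e^{-\theta t}\bigl(\theta^*u(c^*(t),x^*(t))+\theta_0^*\bigr)\,dt .
\]
The right-hand side tends to $0$ as $T\to\infty$. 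This is the content of Theorem 3.3(ii) in \cite{Kamihigashi2001}. With this in place, the rest of your argument goes through, with $|\theta^*|$ in place of $\theta^*$ in your final bound.
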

\begin{proof}[Proof of Proposition \ref{prop_necessary}]
Applying Theorem 3.1 in \cite{Kamihigashi2001}, we have  first-order conditions \eqref{foc_general_prop1}. 
We now prove the TVC \eqref{tvc_condition_general}. We make use of Theorem 3.3(ii) in \cite{Kamihigashi2001}. We do so by verifying two conditions. 
First, we check Assumption 3.2 in \cite{Kamihigashi2001}. Note that the set $X_t$ in \cite{Kamihigashi2001} is defined in our model as follows:
$$X_t\equiv  \{(y,z)\in \rr\times \rr: 
f(y,t)-z\in \rr_+, (y,z)\in \mathcal{X}_t\}.$$

Let $\tilde{\lambda}\in (0,1)$ and take $\lambda \in [\tilde{\lambda},1)$. We claim that  $(\lambda x^*(t), \lambda\dot{x}^*(t))\in X_t$. By Assumption \ref{assum_Ut}, $(\lambda x^*(t), \lambda\dot{x}^*(t))\in \mathcal{X}_t$. By the definition of $(c^*(\cdot), x^*(\cdot))$, we have 
$$f(x^*(t),t)-\dot{x}^*(t)\in \rr_+.$$
Multiplying by $\lambda \in [\tilde{\lambda},1)$, we get $\lambda f(x^*(t),t)-\lambda \dot{x}^*(t)\in \rr_+.$ Since the function $f(\cdot,t)$ is concave and $f(0,t)\geq 0$ $\forall t$, we have $\lambda f(x^*(t),t)\leq  f(\lambda x^*(t),t)$. Hence, $f(\lambda x^*(t),t)-\lambda \dot{x}^*(t)\in \rr_+$, and hence $(\lambda x^*(t), \lambda\dot{x}^*(t))\in X_t$. To sum up,  Assumption 3.2 in \cite{Kamihigashi2001} holds.

Second, we prove the condition in the statement of Theorem 3.3(ii) in \cite{Kamihigashi2001}.  Take  
$\underline{\lambda}$ as in Assumption \ref{tvc_assumption_general}. Then,  take $\tilde{\lambda}=\underline{\lambda}$ and $\lambda \in [\tilde{\lambda},1)$. 

For $t\geq 0, y\geq 0,z\in \rr$, we define
\begin{align}
c(y,z,t) &\equiv f(y,t)-z, & v(y,z,t)&\equiv e^{-\theta t}u(c(y,z,t),y).
\end{align}
We have
\begin{subequations}
\begin{align}
W(\lambda,t)&\equiv \frac{v(x^*(t), \dot{x}^*(t),t)-v(\lambda x^*(t), \lambda \dot{x}^*(t),t)}{1-\lambda} \\
&=e^{-\theta t}\frac{u\big(c(x^*(t), \dot{x}^*(t),t),x^*(t)\big)-u\big(c(\lambda x^*(t), \lambda \dot{x}^*(t),t),\lambda x^*(t)\big)}{1-\lambda}.
\end{align}\end{subequations}
Since the function $f(\cdot,t)$ is concave, we have $f(\lambda x,t)\geq \lambda f(x,t)$ $\forall \lambda\in (0,1), \forall x\in \rr,  \forall t\in \mathcal{T}$. By consequence,
\begin{subequations}
\begin{align}
c(\lambda x^*(t), \lambda \dot{x}^*(t),t)&=f(\lambda x^*(t),t)- \lambda \dot{x}^*(t)\\
&\geq \lambda \Big(f(x^*(t),t)- \dot{x}^*(t)\Big)=\lambda c(x^*(t), \dot{x}^*(t))  \text{ a.e.}
\end{align}
\end{subequations}


By combining with the monotonicity of $u$, we have $u\big(c(\lambda x^*(t), \lambda \dot{x}^*(t),t),\lambda x^*(t))\big)\geq u\big(\lambda c(x^*(t), \dot{x}^*(t),t),\lambda x^*(t)\big)$.
Thus, 
\begin{subequations}
\begin{align}
W(\lambda,t)&\leq e^{-\theta t}\frac{u\big(c(x^*(t), \dot{x}^*(t),t),x^*(t)\big)-u\big(\lambda c( x^*(t), \dot{x}^*(t),t),\lambda x^*(t)\big)}{1-\lambda}\\
&\leq \theta^* u\big(c(x^*(t), \dot{x}^*(t),t),x^*(t)\big) +\theta^*_0\\
&=\theta^* u\big(c^*(t),x^*(t)\big) +\theta^*_0,
\end{align}\end{subequations}
where the second inequality follows Assumption \ref{tvc_assumption_general}.

By Assumption $\int_{t_0}^{\infty}e^{-\theta t}|u( c^*(t),x^*(t))| dt<\infty$, we conclude that the conditions in the statement of Theorem 3.3(ii) in \cite{Kamihigashi2001} is satisfied. By consequence, we have $\limsup_{t\to\infty}\lambda(t) x^*(t)\leq 0$.

By \eqref{foc_general_1} and Assumption \ref{assumption_basic}, we have $\lambda(t)\in \rr_+$. If $ x^*(t)\in \rr_+$ $\forall t$, then we have $\lambda(t)  x^*(t)\geq 0$ $\forall t$, which implies that  $\lim_{t\to\infty}\lambda(t)  x^*(t)=0$.

\end{proof}

In Proposition \ref{prop_necessary}, we focus on interior and regular solutions and we can write the maximum principle in a normal form.\footnote{See \cite{AseevKryazhimskii2007pontryagin}, \cite{AseevVeliov2014needle} and references therein for some normal form versions of the Pontryagin maximum principle.}  When state constraints take a general form $(x(t),\dot{x}(t))\in  \mathcal{X}_t\subseteq \rr^2$, it is not easy to write a maximum principle for a solution which is not necessarily interior (see \cite{frankowska2010optimal}, \cite{seierstad2015maximum}, \cite{Bascoetal2018necessary} among others and references therein).

However, in some economic settings (as in Proposition  \ref{interior_sufficient} below), we can provide sufficient conditions ensuring that a solution is both interior and regular. To establish this result, we impose additional assumptions.
\begin{assum}\label{interior_assumption}
\begin{enumerate}
\item \label{lowerbound_x} There exists $\underline{x}$ such that
\begin{enumerate}
\item $\lim_{c\to 0^+}\frac{\partial u(c,x)}{\partial c}=+\infty$ $\forall x>\underline{x}$ (Inada condition),
\item  $x(t_0)>\underline{x}$, 
\item \label{interior_assumption_DE} For all $\tau\geq t_0$, $x>\underline{x}$, 
any solution $x_0(\cdot)$ of the differential equation $\dot{x}(t)=f(x(t),t) $ $\forall t\geq \tau$ with the initial value $x(\tau)=x$ satisfies  $x_0(t)>\underline{x}$ $\forall t\geq \tau$.\footnote{This assumption obviously holds when $f(x,t)\geq 0$ $\forall x,t$, which is standard in economics.}
\end{enumerate}
\item\label{term2_bounded}  (Discounting condition) There exists $t_1>t_0$ such that 
$\int_{t_0}^\infty e^{-\theta t} |\frac{\partial u}{\partial x}(0, x_0(t)) y(t)| \, dt<\infty$,
where $x_0(\cdot)$ is a solution to the differential equation $\dot{x}(t)=f(x(t),t) $ $\forall t\geq t_0$ with the initial value $x(t_0)$,   while $y$ is a solution to 
\begin{align}\label{DEy_3}
\dot{y}(t)&=A(t)y(t)+b(t) \text{ } \forall t\geq t_0, \quad y(t_0)=0,
\end{align}
where $A(t)\equiv \frac{\partial f}{\partial x}(x_0(t),t)$ and $b$ is defined by $b(t)=-1$ if $t\in [t_0,t_1]$ and  $b(t)=0$ if $t\in (t_1,\infty)$.\footnote{We can compute $y(t)=-\int_{t_0}^t e^{\int_s^t A(\tau)\,d\tau}ds  \text{ if } t\in [t_0,t_1]$ and $y(t)= -\int_{t_0}^{t_1} e^{\int_s^t A(\tau)\,d\tau}ds  \text{ if } t\in (t_1,\infty).$
For $t > t_1$, the variation $b(t) = 0$. Hence, $y(t) = y(t_1) e^{\int_{t_1}^t f_x(x^*(s), s) \, ds}$ $\forall t > t_1.$
}

\end{enumerate}
\end{assum}


\begin{proposition}\label{interior_sufficient}
Consider the problem (P) for the case where \eqref{constraint2} is replaced by $c(t)\in \rr_+$. 
 Let Assumptions \ref{assumption_basic}, \ref{concave_assumption} and \ref{interior_assumption} be satisfied and $\rr_+\times (\underline{x},\infty)\subseteq dom(u)$, where $dom(u)$ denotes the domain of $u$.
Then any solution satisfies $c(t)>0$ almost everywhere and point 1 of Proposition \ref{prop_necessary} holds.
\end{proposition}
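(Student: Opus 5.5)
The argument is by contradiction, using a needle-type perturbation that exploits the Inada condition; once interiority is in hand, point 1 of Proposition \ref{prop_necessary} is invoked. Let $(c^*(\cdot),x^*(\cdot))$ be a solution and suppose the set $Z\equiv\{t: c^*(t)=0\}$ has positive measure.

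\textbf{Step 1: the state stays above $\underline{x}$.} Since $\dot{x}^*(t)=f(x^*(t),t)-c^*(t)\leq f(x^*(t),t)$, $x^*$ is a subsolution of $\dot{x}=f(x,t)$ with the same initial value. A comparison argument therefore gives $x^*(t)\le x_0(t)$, and more importantly keeps $x^*$ in the region covered by point \ref{interior_assumption_DE} of Assumption \ref{interior_assumption}. To get $x^*(t)>\underline{x}$ for all $t$, I would argue as follows. On any interval where $c^*=0$, $x^*$ coincides with a solution of the free equation started above $\underline{x}$, and point \ref{interior_assumption_DE} keeps it above $\underline{x}$. If $x^*$ first touched $\underline{x}$ at some $\tau$, consumption would have been positive just before $\tau$. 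Then the Inada condition at $x>\underline{x}$ makes reallocating that consumption profitable, by the same perturbation as in Step 2. Hence $(c^*,x^*)$ lies in $\rr_+\times(\underline{x},\infty)\subseteq dom(u)$.

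\textbf{Step 2: the perturbation.} Fix $\epsilon>0$ and a time $s$ of density of $Z$. Consider raising consumption by $\epsilon$ on $Z\cap[s,s+h]$. The linearized state response solves $\dot y=A(t)y+b(t)$ with $b=-\mathbf 1$ on that set, as in \eqref{DEy_3}. By concavity of $H$ (Assumption \ref{concave_assumption}), the actual state deviation is bounded below by $\epsilon y(t)$ to first order. I will need to check that the perturbed path remains admissible, meaning $c\geq0$ and $x>\underline{x}$; this holds for small $\epsilon$ because $y$ is bounded on compacts. The objective gain on $Z\cap[s,s+h]$ is at least $\int e^{-\theta t}\big(u(\epsilon,x^*)-u(0,x^*)\big)dt$, which by the Inada condition is $\epsilon\cdot M(\epsilon)\cdot|Z\cap[s,s+h]|$ with $M(\epsilon)\to\infty$. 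The loss from the lower future capital is bounded by $\epsilon\int e^{-\theta t}\big(|\tfrac{\partial u}{\partial c}|\,|\dot y \text{ terms}|+|\tfrac{\partial u}{\partial x}|\,|y|\big)dt$. The discounting condition in point \ref{term2_bounded} of Assumption \ref{interior_assumption} is exactly what makes this tail integral finite, uniformly in $\epsilon$: use $\partial u/\partial x(0,x_0)$ together with the monotonicity of $u$, which gives $u(c,x)\ge u(c,x-\delta)$. The loss is therefore $O(\epsilon)$ while the gain is $\epsilon M(\epsilon)\cdot$const. For small $\epsilon$ the net difference of the truncated integrals is positive for all large $T$, which contradicts the catching-up optimality \eqref{limsup_condition}. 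Hence $c^*>0$ a.e.

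\textbf{Step 3: conclusion.} With $c^*>0$ a.e. and $x^*>\underline{x}$, the solution is interior for the constraint set $\{(x,z): f(x,t)-z\ge 0\}$. The piecewise continuity of $c^*$ and the $C^1$ hypotheses in Assumption \ref{assumption_basic} give (R1)--(R3) locally, in the spirit of the optimal-growth example after the definition of regularity. Point 1 of Proposition \ref{prop_necessary}, i.e. Theorem 3.1 in \cite{Kamihigashi2001}, then yields \eqref{foc_general_prop1}. The main obstacle is Step 2: obtaining a loss bound uniform in the infinite horizon. It also requires matching the perturbation's shape to the specific $b$ in \eqref{DEy_3}, which is fixed as $-1$ on $[t_0,t_1]$. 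I would first try to take $t_1$ and the perturbation interval so that the perturbed state is dominated by $x_0+\epsilon y$, making Assumption \ref{interior_assumption}.\ref{term2_bounded} apply directly.
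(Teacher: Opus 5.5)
Your proposal has a genuine gap in Step 2, and Step 1 inherits it because it appeals to the same perturbation. You perturb around an arbitrary solution: you raise consumption on $Z\cap[s,s+h]$ and keep $c^*$ afterwards. The state deviation therefore persists on $[s+h,\infty)$ along $\dot x=f(x,t)-c^*(t)$. Its linearization involves $\frac{\partial f}{\partial x}(x^*(t),t)$, and the loss involves $\frac{\partial u}{\partial x}$ at $(c^*(t),\cdot)$ near $x^*(t)$. Neither hypothesis controls this tail: Assumption \ref{interior_assumption}.\ref{interior_assumption_DE} concerns only the free equation $\dot x=f(x,t)$, and Assumption \ref{interior_assumption}.\ref{term2_bounded} concerns only the zero-consumption path $x_0$ issued from $x(t_0)$, with $b=-1$ on $[t_0,t_1]$. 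As a result, two things are unproved. The first is a loss bound $O(\epsilon)$ uniform in $T$. The second is admissibility, $x_\epsilon(t)>\underline{x}$ for all $t\ge t_0$; boundedness of $y$ on compacts says nothing about an infinite horizon. Monotonicity of $u$ only gives the sign of the loss, not an upper bound. Moreover, concavity of $f(\cdot,t)$ gives $x_\epsilon-x^*\le\epsilon y$, which is the opposite of the inequality you state.

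Step 3 is a second gap. Having $c^*>0$ almost everywhere is not interiority in the sense of \cite{Kamihigashi2001}, which needs $c^*$ locally bounded away from $0$ around every $t$. Under the Inada condition, (R3) also fails wherever $c^*$ approaches $0$. So point 1 cannot be obtained by invoking Proposition \ref{prop_necessary}.

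The missing idea is to start from the infinite-horizon maximum principle of \cite{Halkin1974} (Theorem 12 in \citet[p.~234]{SeierstadSydsaeter1987optimal}). It requires no interiority and yields $\lambda_0\in\{0,1\}$ and a continuous, piecewise $C^1$ multiplier $\lambda(\cdot)$ satisfying \eqref{optimal_c_general}--\eqref{Hh_derivative}.

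If $\lambda_0=0$, maximizing $\lambda(t)\big(f(x^*(t),t)-c\big)$ over $c\ge 0$ with $\lambda(t)\neq 0$ forces $\lambda(t)>0$ and $c^*\equiv 0$, hence $x^*=x_0$. Only this degenerate case needs a perturbation, namely $c_\epsilon=\epsilon$ on $(t_0,t_1]$ and $0$ afterwards. For it, the state follows the free equation after $t_1$, and the first-order loss is exactly the integral in Assumption \ref{interior_assumption}.\ref{term2_bounded}; this is Lemma \ref{nozero_condition}.

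If $\lambda_0=1$, the maximization gives $e^{-\theta t}\frac{\partial u}{\partial c}(c^*(t),x^*(t))\le\lambda(t)$ with complementary slackness. If $c^*=0$ on a set of positive measure, the Inada condition would force $\lambda(t)=+\infty$ there, contradicting the continuity of $\lambda$. Positivity a.e. thus comes from the finiteness of the multiplier, with no infinite-horizon loss estimate, and \eqref{foc_general_prop1} is read off directly from \eqref{optimal_c_general}--\eqref{Hh_derivative}.

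If you prefer to keep a perturbation route for positivity, you would at least need a compensating perturbation that restores the state in finite time, using a set where $c^*$ is bounded away from $0$. That leaves only the case $c^*\equiv0$ for Assumption \ref{interior_assumption}.\ref{term2_bounded}. You would still need such a maximum principle to get the costate equation.
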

\begin{proof}See Appendix \ref{interior_sufficient_proof}.
\end{proof}
Let us illustrate this result by considering a standard economic setup where $u(c,x)=a(c)+v(x)$ with $a'(0)=+\infty$, $x(t_0)>0$ and  $f(x,t)=R(t)x$ with $R(t)>0$ the capital return. Indeed, in this case, we have $x_0(t)= x(t_0) e^{\int_{t_0}^t R(s) ds}>0$ $\forall t \geq  t_0$. So, Assumption \ref{interior_assumption}.\ref{lowerbound_x} holds for $\underline{x}=0$.

Assumption \ref{interior_assumption}.\ref{term2_bounded} obviously holds if $v(x)=0$ $\forall x$. Consider a case where $v(x)=x^{1-\sigma}/(1-\sigma)$, where $\sigma>0$, and assume that $R(t)=R>0$ $\forall t$. Then, we have $y(t)=y(t_1)e^{R(t-t_1)}$ $\forall t>t_1$ and $x_0(t)=x(t_0)e^{R(t-t_0)}$. So, Assumption \ref{interior_assumption}.\ref{term2_bounded}  holds if 
\begin{align*}
\int_{t_0}^\infty e^{-\theta t} |\frac{\partial u}{\partial x}(0, x_0(t)) y(t)| \, dt<\infty
\Leftrightarrow |y(t_1)||x(t_0)|^{-\sigma}\int_{t_0}^\infty e^{-\theta t} e^{-\sigma R(t-t_0)} e^{R(t-t_1)}dt<\infty.
\end{align*}
This is satisfied if $\theta+\sigma R>R$. This condition is related to the dominating discount conditions (Assumption (A4) in \cite{AseevVeliov2014needle} or (A7) in \cite{AseevKryazhimskii2007pontryagin}), which play an important role in establishing  normal
form versions of the  maximum principle.

\subsection{Sufficient conditions}


\begin{proposition}\label{prop_sufficient}

Let Assumption \ref{concave_assumption} be satisfied. Assume also that $\mathcal{X}_t=\rr_+\times \rr$ $\forall t$. An admissible pair $(c^*(\cdot), x^*(\cdot))$ is a solution if $\lambda_0=1$ and the following conditions hold.
\begin{enumerate}
\item $\int_{t_0}^{\infty}e^{-\theta t}u\left(  c^*(t),x^*(t)\right)  dt\in (-\infty,\infty)$.

\item There exists a continuously differentiable function $\lambda: \mathcal{T}\to \rr$ such that  
\begin{subequations}\label{foc_general}
\begin{align}\label{foc1_general}
\frac{\partial H^*}{\partial c}(x^*(t),c^*(t),t,\lambda_0,\lambda(t))&=0,\\
\label{foc1_general_Hx}\frac{\partial H^*}{\partial x}(x^*(t),c^*(t),t,\lambda_0,\lambda(t))&=-\dot{\lambda}(t),\\
\label{tvc_h}\lim_{t\to\infty}{\lambda}(t) x^*(t)&=0.
\end{align}
\end{subequations}

\end{enumerate}
\end{proposition}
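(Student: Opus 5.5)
We follow the standard Mangasarian--Arrow concavity argument, adapted to the catching-up criterion \eqref{limsup_condition}. Fix an arbitrary admissible pair $(\tilde c(\cdot),\tilde x(\cdot))$ with $\tilde x(t_0)=x^*(t_0)$ and $\tilde x(t)\geq 0$ (since $\mathcal{X}_t=\rr_+\times\rr$). For $T>t_0$ write
\begin{align*}
D_T\equiv\int_{t_0}^{T}\big(L(x^*(t),c^*(t),t)-L(\tilde x(t),\tilde c(t),t)\big)dt .
\end{align*}
By condition 1 the first integral converges, so it suffices to show $\liminf_{T\to\infty}D_T\geq 0$.

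\textbf{Step 1 (concavity).} Use $\lambda_0=1$ and the constraint \eqref{constraint1}, which gives $G(x^*,c^*,t)=\dot x^*$ and $G(\tilde x,\tilde c,t)=\dot{\tilde x}$ a.e. Then
\begin{align*}
L^*-\tilde L = H(x^*,c^*,t,1,\lambda)-H(\tilde x,\tilde c,t,1,\lambda)-\lambda(t)\big(\dot x^*(t)-\dot{\tilde x}(t)\big).
\end{align*}
By Assumption \ref{concave_assumption}, $H(\cdot,\cdot,t,1,\lambda(t))$ is concave. The supergradient inequality at $(x^*(t),c^*(t))$, combined with \eqref{foc1_general} and \eqref{foc1_general_Hx}, gives
\begin{align*}
H^*-\tilde H\geq -\dot\lambda(t)\big(x^*(t)-\tilde x(t)\big).
\end{align*}
Hence, almost everywhere,
\begin{align*}
L^*-\tilde L\geq -\frac{d}{dt}\Big[\lambda(t)\big(x^*(t)-\tilde x(t)\big)\Big].
\end{align*}

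\textbf{Step 2 (integration).} Integrate over $[t_0,T]$. Since $\lambda$ is $C^1$ and $x^*,\tilde x$ are piecewise differentiable and continuous, the fundamental theorem of calculus applies piece by piece, and the boundary terms telescope. Using $x^*(t_0)=\tilde x(t_0)$,
\begin{align*}
D_T\geq -\lambda(T)x^*(T)+\lambda(T)\tilde x(T).
\end{align*}

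\textbf{Step 3 (limit) --- the main obstacle.} The term $\lambda(T)x^*(T)$ tends to $0$ by the transversality condition \eqref{tvc_h}. The remaining term requires $\liminf_{T\to\infty}\lambda(T)\tilde x(T)\geq 0$, which needs $\lambda\geq 0$ together with $\tilde x\geq 0$.

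Nonnegativity of $\tilde x$ comes from $\mathcal{X}_t=\rr_+\times\rr$. For $\lambda\geq 0$, I would first try \eqref{foc1_general}, which reads $e^{-\theta t}\partial u/\partial c(c^*(t),x^*(t))=\lambda(t)$; nonnegativity then follows if $u$ is nondecreasing in $c$. Assumption \ref{assumption_basic} is not listed in this proposition, so I would either invoke the standing monotonicity of $u$ or add it explicitly. Alternatively, I would note that the argument only uses $\lambda(T)\tilde x(T)\geq 0$ for large $T$.

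Given $\lambda\geq0$, we get $\liminf_{T\to\infty} D_T\geq 0$. This is equivalent to \eqref{limsup_condition}, so $(c^*(\cdot),x^*(\cdot))$ is a solution.

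The only technical care needed is at the kinks of the piecewise differentiable paths, where we need that $\lambda(x^*-\tilde x)$ is continuous, so that no jump terms appear. This holds because admissible state paths are continuous.
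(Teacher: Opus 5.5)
Your proposal is correct and follows the paper's proof essentially step by step. Concavity of $H$ together with \eqref{foc1_general}--\eqref{foc1_general_Hx} gives $\int_{t_0}^{T}(L^*-\tilde L)\,dt\geq\lambda(T)[\tilde x(T)-x^*(T)]$, and then $\tilde x\geq 0$ and \eqref{tvc_h} yield $\liminf_{T\to\infty}D_T\geq 0$. You are also right that the step $\lambda(T)\tilde x(T)\geq 0$ additionally needs $\lambda\geq 0$, which comes from \eqref{foc1_general} and the monotonicity of $u$ in $c$ (Assumption \ref{assumption_basic}); the paper uses this tacitly without listing that assumption in the proposition.
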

\begin{proof}
Let $(\tilde{c}(\cdot), \tilde{x}(\cdot))$ be an admissible pair. By using the standard approach (see \citet[Chapter 2, Section 6]{SeierstadSydsaeter1987optimal} for instance) and the concavity of $H$ and \eqref{foc1_general}-\eqref{foc1_general_Hx}, we can prove that 
\begin{align}
\int_{t_0}^{t_1}\Big(e^{-\theta t}u\left(  c^*(t),x^*(t)\right)-e^{-\theta t}u\left(  \tilde{c}(t),\tilde{x}(t)\right)\Big)dt\geq & {\lambda}(t_1) \big[\tilde{x}(t_1))-x^*(t_1)\big]\geq -{\lambda}(t_1)  x^*(t_1)\label{tvc_remark}
\end{align}
where we use $\tilde{x}(t)\in \rr_+ $ $\forall t$ in the last inequality.

Let $t_1$ tend to infinity, we obtain that
\begin{align*}
\liminf_{t_1\to\infty}\int_{t_0}^{t_1}\Big(e^{-\theta t}u\left(  c^*(t),x^*(t)\right)-e^{-\theta t}u\left(  \tilde{c}(t),\tilde{x}(t)\right)\Big)dt\geq \liminf_{t_1\to\infty}\big(-{\lambda}(t_1)  x^*(t_1)\big)=0
\end{align*}
because of our assumption $\lim_{t\to\infty}{\lambda}(t)  x^*(t)=0$.

\end{proof}
\begin{remark}According to \eqref{tvc_remark}, the conclusion of  Proposition \ref{prop_sufficient} still holds if conditions $\mathcal{X}_t=\rr_+\times \rr$ $\forall t$ and \eqref{tvc_h}  in Proposition \ref{prop_sufficient} are replaced by $\liminf_{t\to\infty}{\lambda}(t_1) \big[\tilde{x}(t_1))-x^*(t_1)\big]\geq 0$ for any admissible pair $(\tilde{c}(\cdot), \tilde{x}(\cdot))$. This is basically a variant of the Arrow and Mangasarian sufficient conditions. 
\end{remark}
\begin{remark}
Proposition \ref{prop_sufficient} holds if we replace  (\ref{foc1_general}) by $H(x^*(t),c^*(t),t,\lambda_0,\lambda(t))\geq H(x^*(t),c,t,\lambda_0,\lambda(t))$ $\forall c\geq 0$. 
\end{remark}

\section{Applications}

We consider the following problem:
\begin{subequations}
    \label{application2}
\begin{align}
&\max_{c(\cdot),x(\cdot)}\int_{0}^{\infty}e^{-\theta t}u\left(c(t),x(t)\right)  dt,\\
\text{Constraints: }&c(t)+\dot{x}(t)=f(x(t),t)\equiv R(t)x(t)+\omega(t) \text{ a.e.},\\
&x(t)\geq 0,
\end{align}
\end{subequations}  
where $c(t),x(t)$ respectively represent the consumption and wealth of this agent. Here, we consider a variant of the model with wealth effects \citep{Kurz1968}.
Assume that the return and endowment functions $R,\omega: \mathbb{R}_+\to \mathbb{R}_+$ are differentiable.  

Assume that the function $u$ is strictly increasing, twice continuously differentiable, concave and satisfies Assumption \ref{tvc_assumption_general}. Consider a solution  $(c(\cdot),x(\cdot))$, which satisfies $c(t),x(t)>0$ $\forall t$. According to Proposition \ref{prop_necessary}, we have the FOCs
\begin{subequations}\label{foc_general_ex2}
\begin{align}\label{foc_general_1_ex2}
&e^{-\theta t}\frac{\partial u(c(t),x(t))}{\partial c}=\lambda(t),\\
\label{foc_general_2_ex2}&\dot{\lambda}(t) = -e^{-\theta t}\frac{\partial u(c(t),x(t))}{\partial x}-\lambda(t) \frac{\partial f}{\partial x}(x(t),t).
\end{align}\end{subequations}


Let us focus on a separable utility $u(c)+v(x)$, where both function $u,v$ are increasing, twice continuously differentiable, concave and satisfy the following assumption (which is actually Assumption \ref{tvc_assumption_general}).
\begin{assum}\label{tvc_assumption}
    There exist $\theta^*,\theta^*_0\in \rr$ and $\underline{\lambda}\in (0,1)$ such that $\frac{u(c)-u(\lambda c)}{1-\lambda}\leq \theta^* u(c) +\theta^*_0$ and $\frac{v(x)-v(\lambda x)}{1-\lambda}\leq \theta^* v(x) +\theta^*_0$  $\forall \lambda\in (\underline{\lambda}, 1), \forall c\in \{z_1: u(z_1)>-\infty\}, \forall x\in \{z_2: v(z_2)>-\infty\}$.
\end{assum}
Under Assumption \ref{tvc_assumption} and $\int_{0}^{\infty}e^{-\theta t}|u(c(t),x(t))| dt<\infty$, conditions in part 2 of Proposition \ref{prop_necessary} hold. By consequence, we have the following TVC
\begin{align}
    \label{tvc_condition_ex2}
  &  \lim_{t\to\infty}e^{-\theta t}u'(c(t))x(t)=0.
\end{align}
Note that \eqref{tvc_condition_ex2} holds if $\liminf_{t\to 0}c(t)>0$ and $ \lim_{t\to\infty}e^{-\theta t}x(t)=0$. 

Since $e^{-\theta t}u'(c(t))=\lambda(t)$, we can rewrite the system \eqref{foc_general_ex2} as follows.
\begin{align}\label{foc_special}
  &\frac{\dot{c}(t)}{c(t)}=\frac{(R(t)-\theta)u'(c(t))+v'(x(t))}{-c(t)u^{\prime \prime}(c(t))}.
\end{align}

\begin{example}\label{example1}
{\normalfont
Assume that $v(x)=0$ $\forall x$, $R(t)=R>0, \omega(t)=\omega>0$ $\forall t$ and consider the logarithmic utility ($u(c)=\ln(c)$ $\forall c$). According to \eqref{foc_special}, we have $\dot{c}(t)=(R-\theta)c(t)$ $\forall t$. This gives $c(t)=e^{(R-\theta)t}c(0) \text{ }\forall t$. To find $x(t)$, we use $\dot{x}(t)=Rx(t)+\omega-c(t)$, which means that $\dot{x}(t)-Rx(t)=\omega-e^{(R-\theta)t}c(0).$ This can be rewritten as $\frac{d}{dt}\big(x(t)e^{-Rt}\big)=e^{-Rt}\omega -c(0)e^{-\theta t}$. Then, we  find that
%
\begin{align}  
\label{find_xt}x(t)&=\frac{c(0)}{\theta}e^{(R-\theta)t}+\big(x(0)+\frac{\omega}{R}-\frac{c(0)}{\theta}\big)e^{Rt}-\frac{\omega}{R}.
\end{align}


It remains to find $c(0)$.
It is possible to check that $\int_{0}^{\infty}e^{-\theta t}|\ln(c(t))| dt<\infty$  and  $\int_{0}^{\infty}e^{-\theta t}\ln(c(t)) dt<\infty$. 
Then, by Proposition \ref{prop_necessary}'s part 2, we have the transversality condition $\lim_{t\to\infty}e^{-\theta t}x(t)/c(t)=0$, which allows us to find $c(0)=\theta (x(0)+\omega/R)$.  To sum up, the solution is given by $c(t)=e^{(R-\theta)t}\theta (x(0)+ \omega/R)\text{ }\forall t$.
}
\end{example}

\begin{remark}
Practically, we can prove that a pair $({c}(\cdot),{x}(\cdot))$  is the unique solution as follows. Step 1: we check, by using  Proposition \ref{prop_sufficient} that this is a solution to the problem.\footnote{Of course, we need Proposition \ref{prop_necessary} to identify candidates $({c}(\cdot),{x}(\cdot))$ (see Example \ref{example1}).} Step 2: we prove this is the unique solution by using the strict concavity of the objective function and the concavity of the admissible set. Indeed, if $(\tilde{c}(\cdot),\tilde{x}(\cdot))$ is another solution, we define $(c_{\gamma}(\cdot),x_{\gamma}(\cdot))$ by $c_{\gamma}=\gamma c+(1-\gamma)\tilde{c},x_{\gamma}=\gamma x+ (1-\gamma)\tilde{x}$, where $\gamma\in (0,1)$. This is, of course, admissible. Since $u$ is strictly concave, we have $$\int_{0}^{\infty}e^{-\theta t}u(c_{\gamma}(t),x_{\gamma}(t)) dt>\gamma \int_{0}^{\infty}e^{-\theta t}u(c(t),x(t)) dt+(1-\gamma)\int_{0}^{\infty}e^{-\theta t}u(\tilde{c}(t),\tilde{x}(t)) dt,$$
which is a contradiction. We conclude that it is the unique solution.

 \end{remark}

\begin{remark}In many economic applications, the analysis focuses on behavior in a neighborhood of steady states. Note that we have $\lim_{t\to\infty}e^{-\theta t}\frac{\partial u(c(t),x(t))}{ \partial c}x(t)=0$ if $\liminf_{t\to \infty}c(t)$, $\limsup_{t\to \infty}c(t)$, $\liminf_{t\to \infty}x(t)$, $\limsup_{t\to \infty}x(t)$ are in $(0,\infty)$. In such cases, first-order conditions together with a mild concavity assumption (as in Proposition \ref{prop_sufficient}) are sufficient to ensure the optimality of a trajectory. 
\end{remark}

\appendix
\section{Appendix: Proof of Proposition \ref{interior_sufficient}}
\label{interior_sufficient_proof}
{\bf Step 1 (maximum principle in a normal form)}. Applying  the maximum principle (see  \cite{Halkin1974} or Theorem 12 in  \citet[p.~234]{SeierstadSydsaeter1987optimal}), there exist a constant $\lambda_0\in \{0,1\}$ and a continuous and piecewise continuously differentiable function $\lambda(\cdot)$ such that  $(\lambda_0,\lambda(t))\not=(0,0),$
\begin{subequations}
\begin{align}
\label{optimal_c_general}H(x^*(t),c^*(t),t,\lambda_0,\lambda(t))&\geq H(x^*(t),c,t,\lambda_0,\lambda(t)) \text{ }\forall c\geq 0,\\
\label{Hh_derivative}\frac{\partial H^*}{\partial x}(x^*(t),c^*(t),t,\lambda_0,\lambda(t))&=-\dot{\lambda}(t) \text{ for almost every } t\in \mathcal{T}.
\end{align}
\end{subequations}
If $\lambda_0=0$, then $\lambda(t)\not=0$. Thus, \eqref{optimal_c_general} implies that 
\begin{align}
\lambda(t)\big(f(x^*(t),t)-c^*(t)\big)\geq \lambda(t)\big(f(x^*(t),t)-c\big) \text{ }\forall c\geq 0,
\end{align}
or, equivalently, $\lambda(t)\big(c-c^*(t)\big)\geq 0 \text{ }\forall c\geq 0.$ Take $c>c^*(t)$, we get $\lambda(t)>0$. By consequence, we have $c-c^*(t)\geq 0 \text{ }\forall c\geq 0,\forall t\geq t_0$. This implies that $c^*(t)=0$ $\forall t\geq t_0$, a contradiction (because of Lemma \ref{nozero_condition} below).  Therefore, we have $\lambda_0\not=0$. Hence, we get  $\lambda_0=1$. 
\begin{lemma}\label{nozero_condition}
There does not exist a solution satisfying $c^*(t)=0$ $\forall t\in \mathcal{T}$.
\end{lemma}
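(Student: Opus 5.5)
We argue by contradiction: suppose $(c^*(\cdot),x^*(\cdot))$ is a solution with $c^*(t)=0$ for all $t\in\mathcal{T}$. Then $x^*=x_0$, the solution of $\dot{x}(t)=f(x(t),t)$ with initial value $x(t_0)$. By Assumption \ref{interior_assumption}.\ref{lowerbound_x}(c), $x_0(t)>\underline{x}$ for all $t$. We will build an admissible pair with strictly larger payoff (in the catching-up sense) by consuming a small amount $\epsilon$ on $[t_0,t_1]$, with $t_1$ taken from Assumption \ref{interior_assumption}.\ref{term2_bounded}.

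\textbf{Construction.} For $\epsilon>0$ set $c_\epsilon(t)=\epsilon$ on $[t_0,t_1]$ and $c_\epsilon(t)=0$ for $t>t_1$. Let $x_\epsilon$ solve $\dot{x}_\epsilon=f(x_\epsilon,t)-c_\epsilon$ with $x_\epsilon(t_0)=x(t_0)$. By differentiable dependence on parameters ($f$ is $C^1$ by Assumption \ref{assumption_basic}), we have $x_\epsilon(t)=x_0(t)+\epsilon y(t)+o(\epsilon)$, where $y$ solves \eqref{DEy_3}. Since $x_0(t_1)>\underline{x}$, continuity gives $x_\epsilon(t_1)>\underline{x}$ for small $\epsilon$, and $x_\epsilon(t)>\underline{x}$ on $[t_0,t_1]$. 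After $t_1$, $x_\epsilon$ solves the free ODE from a point above $\underline{x}$, so Assumption \ref{interior_assumption}.\ref{lowerbound_x}(c) gives $x_\epsilon(t)>\underline{x}$ for all $t$. Because $\rr_+\times(\underline{x},\infty)\subseteq dom(u)$ and $c_\epsilon\ge 0$, the pair is admissible.

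\textbf{Payoff comparison.} Write
\begin{align*}
\Delta_\epsilon(T)\equiv\int_{t_0}^{T}e^{-\theta t}\big[u(c_\epsilon,x_\epsilon)-u(0,x_0)\big]dt .
\end{align*}
We split this into the consumption gain $\int_{t_0}^{t_1}e^{-\theta t}[u(\epsilon,x_\epsilon)-u(0,x_\epsilon)]dt$ and the wealth loss $\int_{t_0}^{T}e^{-\theta t}[u(0,x_\epsilon)-u(0,x_0)]dt$.

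For the gain, the mean value theorem together with the Inada condition (a) shows it is at least $\epsilon\, m(\epsilon)$, where $m(\epsilon)\to+\infty$. Indeed, $\partial u/\partial c(\xi,x_\epsilon(t))$ with $\xi\in(0,\epsilon)$ blows up. To make this uniform in $t\in[t_0,t_1]$, we would use compactness of $\{x_0(t):t\in[t_0,t_1]\}\subset(\underline{x},\infty)$ together with concavity of $u(\cdot,x)$, which follows from Assumption \ref{concave_assumption} with $\lambda=0$, $\lambda_0=1$. Concavity gives $u(\epsilon,x)-u(0,x)\ge\epsilon\,\partial_c u(\epsilon,x)$, and we need $\partial_c u(\epsilon,x)\to\infty$ uniformly on compacts. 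If pointwise divergence is not enough, we would apply Dini-type monotonicity in $\epsilon$, which holds by concavity.

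For the loss, we use concavity of $u(0,\cdot)$, again from Assumption \ref{concave_assumption}. It gives $u(0,x_\epsilon)-u(0,x_0)\ge \partial_x u(0,x_0)(x_\epsilon-x_0)$. We then need $x_\epsilon-x_0\ge C\epsilon y$ globally, or at least $|x_\epsilon-x_0|\le K\epsilon|y|$ after $t_1$. This is the main obstacle, because the $o(\epsilon)$ remainder is not uniform on the infinite horizon. The plan is to use concavity of $f(\cdot,t)$, which comes from concavity of $H$ with $\lambda_0=0$, $\lambda=1$. It gives the comparison $\dot{(x_\epsilon-x_0)}\ge A_\epsilon(t)(x_\epsilon-x_0)$ with $A_\epsilon(t)=\partial_x f(x_\epsilon(t),t)\ge A(t)$, where $x_\epsilon\le x_0$ and the sign follows from concavity. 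From this a Gronwall argument should yield $x_\epsilon-x_0\ge \epsilon y\cdot(1+o(1))$, controlled by the behaviour on $[t_0,t_1]$, so that the loss is bounded below by $-\epsilon K\int_{t_0}^\infty e^{-\theta t}|\partial_x u(0,x_0)y|dt$. This is finite by Assumption \ref{interior_assumption}.\ref{term2_bounded}, uniformly in $T$.

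Combining the two pieces gives $\liminf_{T}\Delta_\epsilon(T)\ge\epsilon\,[\,(t_1-t_0)e^{-\theta t_1}m(\epsilon)-K'\,]>0$ for $\epsilon$ small. This contradicts optimality of $c^*\equiv0$ in the sense of \eqref{limsup_condition}, which proves the lemma.
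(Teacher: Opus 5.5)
Your overall strategy is the paper's. You perturb $c^*\equiv 0$ by consuming $\epsilon$ on $[t_0,t_1]$, let the Inada condition make the consumption gain of larger order than $\epsilon$, and control the wealth loss through the linearization $y$ and Assumption~\ref{interior_assumption}.\ref{term2_bounded}. Your handling of the gain is fine: concavity in $c$, plus monotonicity and compactness, gives $\partial u/\partial c(\epsilon,\cdot)\to+\infty$ uniformly near $\{x_0(t):t\in[t_0,t_1]\}$.

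The gap is in the loss estimate, at the step you yourself flag, and both concavity inequalities you use there run the wrong way.

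\emph{The utility factor.} A concave function lies below its tangent lines, so concavity of $u(0,\cdot)$ gives $u(0,x_\epsilon)-u(0,x_0)\le \frac{\partial u}{\partial x}(0,x_0)(x_\epsilon-x_0)$, not $\ge$. The valid lower bound uses $\frac{\partial u}{\partial x}(0,x_\epsilon)$, which is $\ge\frac{\partial u}{\partial x}(0,x_0)$ because $x_\epsilon\le x_0$.

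\emph{The wealth factor.} Write $z=x_\epsilon-x_0$. Concavity of $f(\cdot,t)$ gives $\dot z\le A(t)z-\epsilon$ on $[t_0,t_1]$ and $\dot z\le A(t)z$ afterwards. Hence $z\le \epsilon y\le 0$, i.e.\ $|x_\epsilon-x_0|\ge\epsilon|y|$ for all $t$. The inequality you use, $\dot z\ge A_\epsilon z$, only gives $|z(t)|\le |z(t_1)|\exp\int_{t_1}^t A_\epsilon$. The extra factor $\exp\int_{t_1}^t(A_\epsilon-A)\ge 1$ is not controlled uniformly on $[t_1,\infty)$ by anything in Assumption~\ref{interior_assumption}.

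So both factors of the true loss dominate their linearized counterparts. Assumption~\ref{interior_assumption}.\ref{term2_bounded} concerns only the linearization, so it does not give the bound $-\epsilon K'$ without a domination argument you do not have. The paper does not supply one either: it obtains $\lim_{\epsilon\to 0^+}(U_\epsilon-U)/\epsilon$ by ``the Mean Value Theorem'', which interchanges the limit with the infinite-horizon integral without justification.

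One way to make concavity work for you is to perturb along a segment instead of along the nonlinear family $x_\epsilon$. Fix one $\epsilon_0$ and, for $s\in(0,1)$, let $x_s=(1-s)x_0+s\,x_{\epsilon_0}$ and $c_s=f(x_s,t)-\dot x_s$. Concavity of $f(\cdot,t)$ gives $c_s\ge s\,c_{\epsilon_0}\ge 0$, and $x_s\ge x_{\epsilon_0}>\underline{x}$, so $(c_s,x_s)$ is admissible. Monotonicity of $u$ in $c$, concavity of $u(\cdot,x)$, and concavity of $u(0,\cdot)$ along the segment then give, for a.e.\ $t$,
\[
u(c_s,x_s)-u(0,x_0)\;\ge\; s\,\epsilon_0\,\frac{\partial u}{\partial c}\big(s\epsilon_0,x_s(t)\big)\,\mathbf{1}_{[t_0,t_1]}(t)\;-\;s\,\big(u(0,x_0(t))-u(0,x_{\epsilon_0}(t))\big).
\]
On $[t_0,t_1]$ we have $x_s(t)\in[\min_{[t_0,t_1]}x_{\epsilon_0},\max_{[t_0,t_1]}x_0]$. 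Your uniform Inada argument therefore makes the integrated first term equal to $s$ times a quantity tending to $+\infty$.

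The contradiction with \eqref{limsup_condition} then follows as soon as $\int_{t_0}^\infty e^{-\theta t}\big(u(0,x_0)-u(0,x_{\epsilon_0})\big)dt<\infty$ for a single $\epsilon_0$; no uniformity in $\epsilon$ is needed. This holds, for instance, when $u(0,\cdot)$ is bounded below along $x_{\epsilon_0}$ (e.g.\ if $f\ge 0$, so that $x_{\epsilon_0}$ stays bounded away from $\underline{x}$), using that $\int_{t_0}^\infty e^{-\theta t}u(0,x_0)\,dt$ is finite. It is, however, a condition on a fixed perturbed path and does not follow directly from Assumption~\ref{interior_assumption}.\ref{term2_bounded}.
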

\begin{proof}
Suppose on the contrary that there is a solution with $c^*(t)=0$ $\forall t\in \mathcal{T}$. Then we have $\dot{x}^*(t)=f(x^*(t),t)$ $\forall t$. By Assumption \ref{interior_assumption}.\ref{interior_assumption_DE}, we have $x^*(t)>\underline{x}$ $\forall t$. 

 Take $t_1$ as in Assumption \ref{interior_assumption}.\ref{term2_bounded}. 
Given $\epsilon>0$, define $c_{\epsilon}(\cdot)$ by  $c_{\epsilon}(t)=\epsilon$ if $t\in (t_0,t_1]$, and $c_{\epsilon}(t)=0$ if $t\in (t_1,\infty)$.

We can choose  $\epsilon>0$ small enough, such that
the following differential equation 
\begin{subequations}
\begin{align}\label{DEx_1}
\epsilon+\dot{x}_{\epsilon}(t)&=f(x_{\epsilon}(t),t) \text{ }\forall t\in [t_0,t_1],\\
\label{DEx_2}\dot{x}_{\epsilon}(t)&=f(x_{\epsilon}(t),t)  \text{ }\forall t\in (t_1,\infty).
\end{align}
\end{subequations}
with the initial value $x_{\epsilon}(t_0)=x(t_0)$ has a solution ${x}_{\epsilon}(\cdot)$ satisfying ${x}_{\epsilon}(t)>\underline{x}$ $\forall t$.\footnote{We establish this claim in two steps. First, since $f$ is continuously differentiable (implying local Lipschitz continuity in $x$) and $x_0(t) > \underline{x}$ on the compact interval $[t_0, t_1]$, the Extreme Value Theorem guarantees that $m \equiv \min_{t \in [t_0, t_1]} (x_0(t) - \underline{x}) > 0$. By standard continuous dependence of ODE solutions on parameters (see \citet[Section 2.3]{Perko2001differential} among others), $x_\epsilon(t)$ converges uniformly to $x_0(t)$ on $[t_0, t_1]$ as $\epsilon \to 0^+$. Hence, we can choose $\epsilon > 0$ sufficiently small such that $\sup_{t \in [t_0, t_1]} |x_\epsilon(t) - x_0(t)| < m/2$, which strictly ensures $x_\epsilon(t) > \underline{x}$ for all $t \in [t_0, t_1]$. Second, using Assumption~\ref{interior_assumption}.\ref{interior_assumption_DE} alongside the interior condition $x_{\epsilon}(t_1) > \underline{x}$, we obtain the global existence of $x_{\epsilon}$ on $[t_1,\infty)$ satisfying \eqref{DEx_2} with initial value $x_{\epsilon}(t_1)$.} So, $(c_{\epsilon}(t),x_{\epsilon}(t))$ is in the domain of $u$ for any $t$.

Denote $A(t)\equiv \frac{\partial f}{\partial x}(x^*(t),t)$ and define ${y}(t)\equiv \lim_{\epsilon\to 0^+}\frac{{x}_{\epsilon}(t)-{x}^*(t)}{\epsilon}$.   Define $b(t)=-1$ if $t\in [t_0,t_1]$ and  $b(t)=0$ if $t\in (t_1,\infty)$.  
From \eqref{DEx_1} and \eqref{DEx_2}, we have
\begin{align}\label{DEy_1}
\dot{x}_{\epsilon}(t)-\dot{x}^*(t)&=f(x_{\epsilon}(t),t)-f(x^*(t),t)+ b(t) \epsilon.
\end{align}
Given $t$, by dividing both sides of \eqref{DEy_1} by $\epsilon$ and letting $\epsilon \to 0^+$ we have
\begin{align}\label{DEy_3}
\dot{y}(t)&=A(t)y(t)+b(t), \quad y(t_0)=0.
\end{align}

We now evaluate $U \equiv  \int_{t_0}^\infty e^{-\theta t} u(c^*(t), x^*(t)) dt$ and $U_{\epsilon}\equiv \int_{t_0}^\infty e^{-\theta t} u(c_{\epsilon}(t), x_{\epsilon}(t))  dt$. Applying the Mean Value Theorem, we have
$$\lim_{\epsilon \to 0^+} \frac{U_{\epsilon} - U}{\epsilon} =  \int_{t_0}^{\infty} e^{-\theta t} \left( \lim_{c \to 0^+} \frac{\partial u}{\partial c}(c, x^*(t)) \right) dt + \int_{t_0}^\infty e^{-\theta t} \frac{\partial u}{\partial x}(0, x^*(t)) y(t) \, dt.$$
%

By Assumption \ref{interior_assumption}.\ref{term2_bounded}, the second term in the right hand side is bounded while the first term is equal to $+\infty$ because of Inada condition. By consequence,  $\lim_{\epsilon \to 0^+} (U_{\epsilon} - U)/\epsilon=+\infty$. Hence, $U_{\epsilon} - U>0$ for all $\epsilon>0$ small enough. This strictly contradicts the premise that $c^*(t) = 0$ is an optimal control trajectory.  Lemma \ref{nozero_condition} has been proved.
\end{proof}
{\bf Step 2}: we prove that $c^*(t)>0$ almost
everywhere. From Step 1, we have $\lambda_0=1,\lambda(t)\not=0$. So, condition \eqref{optimal_c_general} becomes 
\begin{align*}
e^{-\theta t}u(c^*(t),x^*(t))+\lambda(t) \big(f(x^*(t),t)-c^*(t)\big)&\geq e^{-\theta t}u(c,x^*(t))+\lambda(t) \big(f(x^*(t),t)-c\big) \text{ } \forall c\geq 0.
\end{align*}
This implies that 
\begin{align*}
e^{-\theta t}u(c^*(t),x^*(t))-\lambda(t) c^*(t)&\geq e^{-\theta t}u(c,x^*(t))-\lambda(t)c \text{ } \forall c\geq 0.
\end{align*}
By consequence, the standard first-order condition gives  $e^{-\theta t} \frac{\partial u}{\partial c}(c^*(t), x^*(t)) - \lambda(t) \leq 0$  with complementary slackness: $c^*(t) \left[ e^{-\theta t} \frac{\partial u}{\partial c}(c^*(t), x^*(t)) - \lambda(t) \right] = 0$.

Suppose that there exists a Lebesgue measurable $E\subseteq \mathcal{T}$ with a strictly positive measure and $c^*(t)=0$ $\forall t\in E$. By combining $e^{-\theta t} \frac{\partial u}{\partial c}(c^*(t), x^*(t)) \leq \lambda(t)$ and Inada condition, we have $\lambda(t)=+\infty$ $\forall t\in E$. This is impossible because the function $\lambda$ is piecewise continuous.

To sum up, we have $c^*(t)>0$ almost
everywhere. Then,  condition \eqref{optimal_c_general} and $c^*(t)\in \rr_{++}$ imply that $\frac{\partial H}{\partial c}(x^*(t),c^*(t),t,\lambda(t))=0$. 
Therefore, we obtain \eqref{foc_general_1}. Note also that condition \eqref{Hh_derivative} becomes \eqref{foc_general_2}.


{\small
\bibliographystyle{apalike}
\bibliography{BDP_ref_bibtex}
}
\end{document}